\newcommand{\N}{\mathbb{N}}
\newcommand{\Z}{\mathbb{Z}}
\newcommand{\R}{\mathbb{R}}
\renewcommand{\H}{\mathbb{H}}
\newcommand{\Rn}{ {\mathbb{R}^n} }
\newcommand{\PS}{ {\Rn \setminus \{ 0 \}} }
\newcommand{\vol}{{\textnormal{vol}_k}}
\newcommand{\comment}[1]{}
\newtheorem{theorem}[equation]{Theorem}
\newtheorem{lemma}[equation]{Lemma}
\newtheorem{proposition}[equation]{Proposition}
\newtheorem{corollary}[equation]{Corollary}
\theoremstyle{definition}
\newtheorem{remark}[equation]{Remark}
\newtheorem{example}[equation]{Example}
\numberwithin{equation}{section}
\DeclareMathOperator{\diam}{diam}
\DeclareMathOperator{\spt}{spt}
\newcounter{minutes}\setcounter{minutes}{\time}
\newcounter{hours}\setcounter{hours}{\time}
\title{\bf  Volume growth of quasihyperbolic balls}
\author[X.-H. Zhang]{Xiaohui Zhang}
\author[R. Kl\'en]{Riku Kl\'en}
\author[V. Suomala]{Ville Suomala}
\author[M. Vuorinen]{Matti Vuorinen}
\begin{document}







\setcounter{page}{1}


\psset{linewidth=1pt}

\begin{abstract}
The purpose of this paper is to study the notion of the quasihyperbolic volume and to find growth estimates for the quasihyperbolic volume of balls in a domain $G \subset {\mathbb R}^n\,,$  in terms of the radii.

\end{abstract}

\maketitle

{\small {\sc\bf Keywords.} Quasihyperbolic volume, uniform porosity, $Q$-regularity}

{\small {\sc\bf 2010 Mathematics Subject Classification.} 51M10, 51M25, 28A80}


\def\thefootnote{}
\footnotetext{ \texttt{\tiny File:~\jobname .tex,
          printed: \number\year-\number\month-\number\day,
         \thehours.\ifnum\theminutes<10{0}\fi\theminutes}
} \makeatletter\def\thefootnote{\@arabic\c@footnote}\makeatother

\section{Introduction}

Since its introduction three decades ago, the quasihyperbolic
metric has become a popular tool in many subfields of geometric
function theory, for instance, in the study of quasiconformal
maps of ${\mathbb R}^n, n\ge 2,$ and of Banach spaces \cite{gh,va}, analysis of metric spaces \cite{h} and hyperbolic type metrics \cite{himps}.
A natural question is whether or not and to what extent, the results
of hyperbolic geometry have counterparts for the quasihyperbolic
geometry. For instance in \cite{k} it was noticed that some
facts from hyperbolic trigonometry of the plane have counterparts
in the quasihyperbolic setup.

The purpose of this paper is to study the notion of the quasihyperbolic volume and to find growth estimates for the quasihyperbolic volume of balls in a domain $G \subset {\mathbb R}^n\,,$  in terms of the radii.
Below we give an explicit
growth estimate for the case of domains with Ahlfors regular boundary.

For a compact set with empty interior $E  \subset {\mathbb R}^n\,,$  and $0<s<t\,,$ we consider the layer sets
$E(s,t)=\{  z \in \Rn  : s\le d(z,E)\le t \} $
and relate their volume to the metric size of $E$ via its (Hausdorff)
dimension under the additional assumption that $E$ be Ahlfors $Q$-regular for some $0<Q<n$. It is practically equivalent to formulate this idea
in terms of the number of those Whitney cubes of the Whitney
decomposition of ${\mathbb R}^n \setminus E$ that meet $E(s,t)\,.$
This idea goes back to \cite{mv}. One of the key ideas of
\cite{mv} was to estimate the metric
size of $E$ from above in terms of the size of the layer sets. In Lemmas \ref{lemma:por} and \ref{lemma:reg}, we will refine some results of \cite{mv}.

In the last section, we estimate from below the growth of the quasihyperbolic volume of balls in the case when $G= \Rn\setminus
E$ is $\psi$-uniform and $E$ is Ahlfors regular. The main result of the paper is Theorem \ref{lowerbound} which
gives a lower bound for the volume growth.

\section{Notation}

We assume $n \ge 2$ and use the notation $B^n(x,r)=\{z\in\Rn\, : \,|x-z|<r\}$ for the Euclidean ball, and its boundary is the sphere $S^{n-1}(x,r)=\partial B^n(x,r)$, where the center $x$ can be omitted if $x=0$.

The \emph{quasihyperbolic distance} between two points $x$ and $y$ in a proper subdomain $G$ of the Euclidean space $\Rn$, $n \ge 2$, is defined by
$$
  k_G(x,y) = \inf_{\alpha \in \Gamma_{xy}} \int_{\alpha}\frac{|dz|}{d(z)},
$$
where $d(z)=d(z,\partial G)$ is the (Euclidean) distance between the point $z \in G$ and the boundary of $G$ (denoted $\partial G$), and $\Gamma_{xy}$ is the collection of all rectifiable curves in $G$ joining $x$ and $y$.
Note that the quasihyperbolic metric in a half-space is just the hyperbolic metric, and that for a ball the two are bilipschitz equivalent.
Some basic properties of the quasihyperbolic metric can be found in \cite{vub}. In particular,  Martin and Osgood \cite[page 38]{mo} showed
that for $x,y\in\R^n\setminus\{0\}$ and $n\geq2$
\begin{equation}\label{PSformula}
k_{\R^n\setminus\{0\}}(x,y)=\sqrt{\theta^2+\log^2\frac{|x|}{|y|}},
\end{equation}
where $\theta=\measuredangle(x,0,y)\in[0,\pi]$.

For the volume of the unit ball in ${\mathbb R}^n$ we use the notation
 $\Omega_n:=m(B^n(0,1))=\pi^{n/2}/\Gamma((n/2)+1)$ where $m$ is the $n$-dimensional Lebesgue measure and $\Gamma$ stands for the
usual $\Gamma$-function, see \cite[Chapter 6]{as}. The surface area of the unit
sphere  is  $\omega_{n-1}: = m_{n-1}(S^{n-1}(0,1))=n\Omega_n  \,.$

The quasihyperbolic volume of a Lebesgue measurable set $A \subset G$ is defined by
$$
  \vol_{_G}(A) = \int_A \frac{dm(z)}{d(z)^n}\,.
$$
We also use the notation $\vol(A)$ for $\vol_{_G}(A)$ if the domain $G$ is clear from the context.

Below we use the Whitney decomposition of the complement of a closed set in $\Rn$. If $E \subset \Rn$ is non-empty and closed, then $\Rn \setminus E$ can be presented \cite[page 16]{s} as a union of closed dyadic cubes $Q_j^i$
\begin{equation}\label{wcube}
  \Rn \setminus E = \bigcup_{i \in \Z} \bigcup_{j=1}^{N_i}Q_j^i,
\end{equation}
where the cubes $Q_j^i$ have the following properties:

\begin{enumerate}
\item each $Q_j^i$ has faces parallel to the coordinate planes and edges of length $2^{-i}$,
\item the interiors of the cubes $Q_j^i$ and $Q_l^k$ are mutually disjoint provided $l\neq j$ or $i\neq k$,
\item the distance between the cube $Q_j^i$ and $E$ satisfies the following inequalities
\begin{equation}\label{wcestimate}
  2^{-i}\sqrt{n} \le d(Q_j^i,E) \le 2^{2-i}\sqrt{n}.
\end{equation}
\end{enumerate}
The decomposition (\ref{wcube}) is called the \emph{Whitney decomposition}, the cubes $Q_j^i$ are called \emph{Whitney cubes} and the set $\{ Q_j^i \colon j=1,\dots ,N_i \}$ is called the \emph{$i^{th}$ generation} of cubes.

For $0 < s \leq t < \infty$ and $E\subset\Rn$, we define
$$
E(s) = \{ x \in \Rn \colon  d(x,E) \leq s \}
$$
and
\begin{equation}\label{layerE}
E(s,t)=\{ x \in \Rn \colon  s\leq d(x,E) \leq t \} .
\end{equation}
We set $E(s,t)=\emptyset$ for $s>t$.
Because ${\rm diam}(Q_j^i) = \sqrt{n} 2^{-i}$ we have by (\ref{wcestimate})
\begin{equation}\label{Ek}
  Q_j^i \subset E(\sqrt{n}2^{-i},5\sqrt{n}2^{-i}).
\end{equation}

We say that a set $E\subset\Rn$ is $(p,r_0)$-\emph{uniformly porous} \cite{va-pro} if for all $x\in E$ and $0<r<r_0$ there is $y\in B(x,r)$ with $d(y,E)\ge\alpha r$. A set $E$ is uniformly porous, if it is $(p,r_0)$-uniformly porous for some $p,r_0>0$. We refer to $p,r_0$ and $n$ as the \emph{uniform porosity data}.

A set $E\subset\Rn$ is $Q$-\emph{regular} for $0<Q<n$, if there is a (Borel regular, outer-) measure $\mu$ with $\spt(\mu)=E$ and constants $0<\alpha\le\beta<\infty$ such that \[\alpha r^Q\le \mu(B^n(x,r))\le\beta r^Q,\text{ for all }x\in E\text{ and } 0<r<\diam(E)\,.\]
Here $\spt(\mu)$ denotes the smallest closed set with full $\mu$-measure. We refer to $n, Q,\alpha,\beta$ and $\diam(E)$ as the \emph{$Q$-regularity data}.

There is a close connection between the notions of uniform porosity and $Q$-regularity. Indeed, it is well known and easy to see (e.g. \cite[Lemma 3.12]{bhr}) that if $E$ is $Q$-regular for some $0<Q<n$, then it is uniformly porous. Conversely, if  $E$ is uniformly porous, then it is a subset of some $Q$-regular set for some $0<Q<n$. See \cite[Theorem 5.3]{jjkrs} for a more precise quantitative statement.

Let $\psi: [0,\infty)\to[0,\infty)$  be a homeomorphism with
$\psi(0) = 0$. A domain $G\subset\Rn$ is said to be $\psi$-\emph{uniform} \cite{vu} if
\begin{equation}
k_G(x, y)\leq\psi(|x-y|/\min\{d(x,\partial G), d(y,\partial G)\})
\end{equation}
for all $x, y\in G$. In particular, the domain $G$ is $L$-\emph{uniform} if $\psi(t)=L\log(1+t)$ with $L>1$.
 A domain is called \emph{uniform} if it is a $L$-uniform domain for some $L>1$. These domains have been studied by many authors, see \cite{go,klsv,mar,va88,vu}.

\section{Basic examples for the quasihyperbolic volume}

As mentioned in Introduction, for a given domain $G$ and a fixed point $x\in G\,,$  our goal is to find estimates for the asymptotic behaviour of $\vol(B_k(x,r))$  when
$$   B_k(x,r) = \{  z \in G:  k_G(x,z)<r \}$$
is the quasihyperbolic ball.
As an example, we first consider very regular domains for which the correct asymptotic behavior can be obtained by direct calculation.

\subsection{\bf Unit ball.} {
Let us find the quasihyperbolic volume of $B_k(0,r)$ in the unit ball $G=B^n=B^n(0,1)$. Observe first that
$$
B_k(0,t) = B^n(0,1-e^{-t})\,.
$$
We need the following integral representation for the hypergeometric function: for $c>b>0$,
$$
 F(a,b;c;x)=\dfrac{1}{B(b,c-b)}\int_0^1 t^{b-1}(1-t)^{c-b-1}(1-xt)^{-a}dt
$$
where $B(b,c-b)=\Gamma(b)\Gamma(c-b)/\Gamma(c)$.
By definition
\begin{eqnarray}\label{kvolinBn}
  \vol(B^n(0,r)) & = & \int_{B^n(r)}\frac{dm(x)}{(1-|x|)^n} = \omega_{n-1} \int_0^r\frac{t^{n-1}}{(1-t)^n}dt \nonumber\\
   & = & \omega_{n-1} r^n \int_0^1 t^{n-1} (1-r t)^{-n}dt \nonumber\\
  & = & \frac{\omega_{n-1}r^n}{n}F(n,n;n+1;r)\nonumber\\
  & = & \frac{\omega_{n-1}r^n}{n(1-r)^{n-1}}F(1,1;n+1;r),
\end{eqnarray}
where the last equality follows from \cite[15.3.3]{as}
$$
F(a,b;c;x)=(1-x)^{c-a-b}F(c-a,c-b;c;x).
$$
Now $\vol(B_k(0,r)) =  \vol(B^n(0,1-e^{-r}))$ and hence for $0<r<\infty$
\begin{eqnarray*}
  \vol(B_k(0,r))&=&\frac{\omega_{n-1}\left(1-e^{-r}\right)^n}{n \left(1-(1-e^{-r})\right)^{n-1}}F(1,1;n+1;1-e^{-r})\\
  &\sim&\frac{\omega_{n-1}}{n-1}e^{(n-1)r}\quad (r\to\infty),
\end{eqnarray*}
since by \cite[15.1.20]{as}
$$
F(1,1;n+1;1)=\dfrac{\Gamma(n+1)\Gamma(n-1)}{\Gamma(n)^2}=\frac{n}{n-1}.
$$
}

\begin{proposition}\label{ratioinBn}
  For $s>0$, $\lambda>1$, $G = B^n$ and $E = \Rn\setminus G$
  $$
    \frac{\vol(E(s,\lambda s))}{\vol(E(s,\infty))} \to 1-\lambda^{1-n}
  $$
  as $s \to 0^+$.
\end{proposition}
\begin{proof}
  Let $0<s<1/\lambda$. Then the claim is equivalent to
  $$
    \frac{\vol(B^n(1-s)\setminus B^n(1-\lambda s)}{\vol(B^n(1-s))} \to 1-\lambda^{1-n}
  $$
  as $s \to 0^+$.

  By (\ref{kvolinBn})
  \begin{eqnarray*}
    & &\frac{\vol(B^n(1-s)\setminus B^n(1-\lambda s))}{\vol(B^n(1-s))}\\
    &= &1- \frac{\lambda^{1-n}(1-\lambda s)^nF(1,1;n+1;1-\lambda s)}{(1-s)^{n}F(1,1,n+1,1-s)}\\
    &\to&1-\lambda^{1-n}\quad(s\to0).
  \end{eqnarray*}
\end{proof}

\begin{remark}
For $G=B^n$, let $m_h(B^n(r))$ be the hyperbolic volume of the Euclidean ball $B^n(r)$. For a Lebesgue measurable set $A \subset B^n$ its hyperbolic volume is defined by
$$
m_h(A) = \int_A \frac{2^n \, dm(x)}{(1-|x|^2)^n} \,.
$$
It follows that
$$
\left(\dfrac{2}{1+r}\right)^n\vol(B^n(r))=\omega_{n-1}\int_0^r\dfrac{2^n t^{n-1}}{(1+r)^n(1-t)^n}dt
$$
$$
\leq \omega_{n-1}\int_0^r\dfrac{2^n t^{n-1}}{(1-t^2)^n}dt=m_h(B^n(r))
$$
and
$$
m_h(B^n(r))\leq\omega_{n-1}\int_0^r\dfrac{2^n t^{n-1}}{(1-t)^n}dt=2^n\vol(B^n(r)).
$$
It is easy to see that
$$m_h(B^n(r))=\omega_{n-1} 2^n r^n F(n/2,n;1+n/2;r^2)/n\,.$$
Corollary \ref{ratioinBn} is true for the hyperbolic metric with the same limiting value $1-\lambda^{n-1}$.
\end{remark}

\subsection{\bf Punctured space.} {\rm
We will consider the growth of quasihyperbolic volume in multiply connected domains. It can be shown \cite[Lemma 4.56, Theorem 4.57, Corollary 4.58]{k} that in $\R^2 \setminus \{ 0 \}$ the quasihyperbolic area of $B_k(x,r)$ is equal to $\pi r^2$, if $r \le \pi$, and $2\pi\sqrt{r^2-\pi^2}+2\pi^2 \arctan (\pi / \sqrt{r^2-\pi^2})$, if $r > \pi$. Therefore
$$2\pi\sqrt{r^2-\pi^2} \le \vol(B_k(x,r)) \le 4 \pi r$$
if $r > \pi$. We will now find similar lower and upper bounds for the quasihyperbolic volume of $B_k(x,r)$ in $\PS$ for $n > 2$.}

\begin{proposition}
  For $x \in \PS$, $n > 2$ and $r > \pi$
  $$
    2 \omega_{n-1} \sqrt{r^2-\pi^2} \le \vol(B_k(x,r)) \le 2 \omega_{n-1} r\,.
  $$
\end{proposition}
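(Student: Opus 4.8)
The plan is to reduce the quasihyperbolic volume to an elementary one-dimensional integral by combining the similarity-invariance of the punctured space with the closed form \eqref{PSformula}. First I would note that since $\partial G=\{0\}$ we have $d(z)=|z|$, so both the density $|z|^{-n}$ defining $\vol$ and the metric $k_G$ (which by \eqref{PSformula} depends only on the angle at $0$ and on the ratio $|x|/|y|$) are invariant under rotations about $0$ and under dilations $z\mapsto\lambda z$. Hence $\vol(B_k(x,r))$ is independent of $x$, and I may take $|x|=1$ with $x$ lying on a coordinate axis.

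Next I would introduce spherical coordinates, writing $y$ through its radius $\rho=|y|$, its polar angle $\theta=\measuredangle(y,0,x)\in[0,\pi]$, and the remaining $(n-2)$ angular variables. By \eqref{PSformula} the ball becomes
$$B_k(x,r)=\{\,y:\theta^2+\log^2\rho<r^2\,\}.$$
The decisive step is the substitution $u=\log\rho$, for which $du=d\rho/\rho$: the radial Jacobian $\rho^{n-1}\,d\rho$ of Lebesgue measure combines with the quasihyperbolic density $\rho^{-n}$ to give exactly $du$. Using the slicing $d\sigma_{S^{n-1}}=\sin^{n-2}\theta\,d\theta\,d\sigma_{S^{n-2}}$ of surface measure and writing $\omega_{n-2}$ for the area of the unit sphere $S^{n-2}$, the quasihyperbolic volume collapses to the flat product integral
$$\vol(B_k(x,r))=\omega_{n-2}\iint_{\{\theta^2+u^2<r^2,\ 0\le\theta\le\pi\}}\sin^{n-2}\theta\,\,du\,d\theta.$$
Since $r>\pi\ge\theta$, for each fixed $\theta\in[0,\pi]$ the inner $u$-range is the full interval of length $2\sqrt{r^2-\theta^2}$, so integrating out $u$ yields
$$\vol(B_k(x,r))=2\omega_{n-2}\int_0^\pi\sin^{n-2}\theta\,\sqrt{r^2-\theta^2}\,d\theta.$$

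Finally I would estimate the square root pointwise by $\sqrt{r^2-\pi^2}\le\sqrt{r^2-\theta^2}\le r$ on $[0,\pi]$ and invoke the identity $\omega_{n-2}\int_0^\pi\sin^{n-2}\theta\,d\theta=\omega_{n-1}$, which is just the recursion obtained by slicing $S^{n-1}$ into latitude spheres. These two observations turn the displayed integral directly into the asserted bounds $2\omega_{n-1}\sqrt{r^2-\pi^2}\le\vol(B_k(x,r))\le 2\omega_{n-1}r$. I expect no genuine obstacle here: the entire argument hinges on the coordinate change in the middle step, where the cancellation between $\rho^{n-1}$ and $\rho^{-n}$ trivializes the radial integration and reduces everything to bounding $\sqrt{r^2-\theta^2}$, which is immediate.
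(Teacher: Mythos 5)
Your proposal is correct, and it reaches the stated bounds by a genuinely different route from the paper. The paper never computes $\vol(B_k(x,r))$ exactly: it halves the problem using the invariance of $B_k(x,r)$ under inversion in $S^{n-1}(|x|)$, sandwiches $B_k(x,r)\cap B^n(|x|)$ between the spherical annuli $B^n(|x|)\setminus B^n(|x|e^{-\sqrt{r^2-\pi^2}})$ and $B^n(|x|)\setminus B^n(|x|e^{-r})$ (the inner inclusion coming from maximizing \eqref{PSformula} over a sphere, the outer from the standard estimate $k\ge|\log(|x|/|y|)|$), and then evaluates the quasihyperbolic volume of an annulus as $\omega_{n-1}\log(b/a)$. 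You instead pass to log-polar coordinates, where the cancellation of $\rho^{n-1}$ against $\rho^{-n}$ and the Martin--Osgood formula turn the ball into the flat region $\{\theta^2+u^2<r^2\}$ and give the exact identity
$$\vol(B_k(x,r))=2\,\omega_{n-2}\int_0^\pi\sin^{n-2}\theta\,\sqrt{r^2-\theta^2}\,d\theta\,,$$
after which the bounds follow from $\sqrt{r^2-\pi^2}\le\sqrt{r^2-\theta^2}\le r$ and the latitude-slicing identity $\omega_{n-2}\int_0^\pi\sin^{n-2}\theta\,d\theta=\omega_{n-1}$. All steps check out (rotation and dilation invariance justify normalizing $|x|=1$, and $r>\pi$ guarantees the full $u$-interval for every $\theta\in[0,\pi]$). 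What your approach buys is an exact closed-form integral generalizing the planar formula quoted just before the proposition, from which sharper asymptotics than the two-sided bound could be read off; what the paper's approach buys is brevity and independence from the explicit surface-measure decomposition, needing only set inclusions and the elementary annulus computation \eqref{kvolinPS}.
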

\begin{proof}
  Since $B_k(x,r)$ is invariant in the inversion in $S^{n-1}(|x|)$ we have $\vol(B_k(x,r)) = 2 \vol(B_k(x,r) \cap B^n(|x|) )$. By \cite[(3.9)]{vub} $B_k(x,r) \subset \Rn \setminus B^n(|x|e^{-r})$. Let $S=S^{n-1}(|x|e^{-\sqrt{r^2-\pi^2}})$, then by (\ref{PSformula}),  $\max\limits_{y\in S}k_{\PS}(x,y)=r$ implies
  $$B^n(|x|) \setminus B^n(|x|e^{-\sqrt{r^2-\pi^2}}) \subset B_k(x,r).$$
   Hence we have
  \begin{equation}\label{lowerPPestimate}
    \vol(B^n(|x|) \setminus B^n(|x|e^{-\sqrt{r^2-\pi^2}})) \le \vol(B_k(x,r) \cap B^n(|x|))
  \end{equation}
  and
  \begin{equation}\label{upperPPestimate}
    \vol(B_k(x,r) \cap B^n(|x|)) \le \vol(B^n(|x|) \setminus B^n(|x|e^{-r})).
  \end{equation}
  Next let us find the quasihyperbolic volume of annulus $E(a,b)=\{x\in\Rn\,|\,a<|x|<b\}$, $0 < a < b < \infty$. By definition
\begin{equation}\label{kvolinPS}
  \vol(E(a,b)) = \int_{E(a,b)}\frac{dm(x)}{|x|^n} = \omega_{n-1} \int_a^b\frac{t^{n-1}}{t^n}dt = \omega_{n-1} \log \frac{b}{a}.
\end{equation}
 Now the assertion follows from the equations (\ref{lowerPPestimate}) and (\ref{upperPPestimate}).
\end{proof}

\begin{corollary}
  For $s>0$, $\lambda>1$,  $G = B^n \setminus \{ 0 \}$ and $E = \Rn\setminus G$
  $$
    \frac{\vol(E(s,\lambda s))}{\vol(E(s,\infty))} \to 1-\lambda^{1-n}
  $$
  as $s \to 0$.
\end{corollary}
\begin{proof}
  Let $s < 1/\lambda$. Then $E(s,\lambda s) = E_1\cup E_2$ where $E_1=B^n(1-s) \setminus B^n(1-\lambda s)$ and $E_2=B^n(\lambda s) \setminus B^n(s)$. Then by (\ref{kvolinBn}) and (\ref{kvolinPS})
  \begin{eqnarray*}
    \vol_{_G}(E(s,\lambda s)) & = & \vol_{_{B^n}}(E_1) + \vol_{_\PS}(E_2)\\
    & = & \dfrac{\omega_{n-1}(1-s)^{n}}{ns^{n-1}}F(1,1;n+1,1-s)\\
    & & -\dfrac{\omega_{n-1}(1-\lambda s)^{n}}{n(\lambda s)^{n-1}}F(1,1;n+1,1-\lambda s)+\omega_{n-1}\log \lambda\\
    &\sim&\dfrac{\omega_{n-1}}{n-1}(1-\lambda^{1-n})s^{1-n}\quad (s\to0).
  \end{eqnarray*}
  Similarly $E(s,\infty) = B^n(1-s) \setminus B^n(s)=E_1'\cup E_2'$ where $E_1'=B^n(1-s)\setminus B^n(1/2)$ and $E_2'=B^n(1/2)\setminus B^n(s)$,  and
\begin{eqnarray*}
    \vol_{_G}(E(s,\infty)) & = & \vol_{_{B^n}}(E_1') + \vol_{_\PS}(E_2')\\
    & = & \dfrac{\omega_{n-1}(1-s)^{n}}{ns^{n-1}}F(1,1;n+1,1-s)\\
    & & -\dfrac{\omega_{n-1}}{2n}F(1,1;n+1,1/2)+\omega_{n-1}\log(1/2s)\\
    &\sim&\dfrac{\omega_{n-1}}{n-1}s^{1-n}+\omega_{n-1}\log(1/s)\quad (s\to0).
  \end{eqnarray*}
  Now we have
  $$
   \lim_{s\to0} \frac{\vol(E(s,\lambda s))}{\vol(E(s,\infty))}=
      \lim_{s\to0}\dfrac{\dfrac{\omega_{n-1}}{n-1}(1-\lambda^{1-n})s^{1-n}}
      {\dfrac{\omega_{n-1}}{n-1}s^{1-n}+\omega_{n-1}\log(1/s)}=1-\lambda^{1-n}.
  $$
\end{proof}

\subsection{\bf Half space.} {\rm The next two propositions concern the
case of the half space, in which case the quasihyperbolic volume coincides
with the classical hyperbolic volume. The hyperbolic volume of  hyperbolic simplexes in the upper half space has been considered by J. Milnor in \cite{milnor}.}

\begin{proposition}
  For $x \in  \H^2$ and $r > 0$ we have
  \[
    \vol(B_k(x,r)) = 2 \pi (\cosh r -1).
  \]
\end{proposition}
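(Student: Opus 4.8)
The plan is to exploit the fact that on the half space the quasihyperbolic metric \emph{is} the hyperbolic metric. Identify $\H^2$ with the upper half-plane $\{z=x_1+ix_2:x_2>0\}$ of $\mathbb{C}$. Then $\partial\H^2$ is the real axis, so $d(z)=x_2=\operatorname{Im}(z)$ and the quasihyperbolic volume element $dm(z)/d(z)^2=dx_1\,dx_2/x_2^2$ is exactly the hyperbolic area element. Consequently $k_{\H^2}$ coincides with the hyperbolic distance and $B_k(x,r)$ is a hyperbolic disc of radius $r$; the quantity to compute is its hyperbolic area.

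First I would use that the M\"obius transformations preserving $\H^2$ act transitively by hyperbolic isometries, and that such isometries preserve both $k_{\H^2}$ and the area element $dx_1\,dx_2/x_2^2$. Hence $\vol(B_k(x,r))$ does not depend on $x$, and I may assume the center is $x=i$. Next I would identify the disc explicitly: from the standard formula $\cosh k_{\H^2}(i,z)=1+|z-i|^2/(2x_2)$, the condition $k_{\H^2}(i,z)<r$ rearranges, on completing the square in $x_2$, to
\[ x_1^2+(x_2-\cosh r)^2<\sinh^2 r, \]
so $B_k(i,r)$ is the \emph{Euclidean} disc centered at $(0,\cosh r)$ with Euclidean radius $\sinh r$, whose points have heights $x_2$ ranging over $[e^{-r},e^r]$.

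Finally I would evaluate $\int dx_1\,dx_2/x_2^2$ over this disc. The cleanest route is to pass to geodesic polar coordinates $(\rho,\varphi)$ about the center, in which the hyperbolic metric takes the form $d\rho^2+\sinh^2\rho\,d\varphi^2$; then the area element is $\sinh\rho\,d\rho\,d\varphi$ and
\[ \vol(B_k(i,r))=\int_0^{2\pi}\!\!\int_0^r \sinh\rho\,d\rho\,d\varphi=2\pi(\cosh r-1). \]
Alternatively one can integrate directly in the Cartesian coordinates, slicing the Euclidean disc by horizontal lines and substituting $x_2-\cosh r=\sinh r\sin\psi$.

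I expect the main obstacle to be precisely this evaluation. The direct Cartesian integral reduces to $2\sinh^2 r\int_{-\pi/2}^{\pi/2}\cos^2\psi/(\cosh r+\sinh r\sin\psi)^2\,d\psi$, which is elementary but requires some care to reach the closed form $2\pi(\cosh r-1)$; this is why reducing to geodesic polar coordinates (equivalently, invoking the classical hyperbolic-area-of-a-disc formula) is the preferable path, with the isometry reduction to the center $i$ serving mainly to make the identification of the ball transparent.
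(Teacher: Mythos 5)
Your proof is correct, and it reaches the result by a slightly different route than the paper. Both arguments begin the same way: on the half-plane the quasihyperbolic metric and volume element coincide with the hyperbolic ones, and the quasihyperbolic ball is recognized as the Euclidean disc $B^2$ of center $(0,\cosh r)$ (times the height of the base point) and radius $\sinh r$ — your derivation via $\cosh k_{\H^2}(i,z)=1+|z-i|^2/(2x_2)$ is a cleaner justification of this identification than the paper, which simply asserts it. Where you diverge is the evaluation: the paper slices the Euclidean disc by horizontal lines and computes the Cartesian integral
\[
\int_{t\cosh r-t\sinh r}^{t\cosh r+t\sinh r}\frac{2\sqrt{t^2\sinh^2 r-(t\cosh r-h)^2}}{h^2}\,dh=2\pi(\cosh r-1),
\]
(the factor $2$ and the $t$ inside the square root are missing in the paper's displayed integrand, evidently typos), whereas you pass to geodesic polar coordinates and invoke the classical area element $\sinh\rho\,d\rho\,d\varphi$, which makes the computation immediate. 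Your route buys a trivial integral at the cost of importing the geodesic polar form of the hyperbolic metric (a standard fact, but one that itself needs a citation or a short derivation); the paper's route is self-contained Euclidean calculus but leaves the reader to verify a nontrivial definite integral. Your preliminary reduction to the center $i$ by Möbius isometries is also more machinery than needed — the paper's choice $x=te_2$ already covers the general case since the integral is independent of $t$ — but it is harmless.
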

\begin{proof}
  Let us choose $x=t e_2=(0,t)\in\H^2$ implying that
  \[
    B_k(x,r) = B^2(e_2 t \cosh r, t \sinh r).
  \]
  Now
  \begin{eqnarray*}
    \vol(B_k(x,r)) &=& \int_{t\cosh r-t\sinh r}^{t\cosh r+t\sinh r}\frac{\sqrt{t^2 \sinh^2 r-(\cosh r-h)^2}}{h^2}dh\\
     &=& 2 \pi (\cosh r -1).
  \end{eqnarray*}
\end{proof}

\begin{proposition}
  For $x \in  \H^3$ and $r > 0$ we have
  \[
    \vol(B_k(x,r)) = \pi (\sinh (2r) -2r).
  \]
\end{proposition}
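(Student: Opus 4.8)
The plan is to follow the computation in the preceding proposition for $\H^2$. First I would use that Euclidean similarities preserving the half space are isometries of the quasihyperbolic (equivalently hyperbolic) metric, so $\vol(B_k(x,r))$ is independent of $x$ and I may take $x=e_3$; then $d(z)=x_3$. The essential geometric input is that a quasihyperbolic ball in the half space is a Euclidean ball. Indeed, along the vertical axis the hyperbolic distance from $e_3$ to $t\,e_3$ is $|\log t|$, so $B_k(e_3,r)$ meets the axis in the segment from $e^{-r}e_3$ to $e^{r}e_3$; being a Euclidean ball, it therefore has Euclidean center $(\cosh r)e_3$ and Euclidean radius $\sinh r$, i.e.
\[
  B_k(e_3,r)=B^3\bigl((\cosh r)\,e_3,\;\sinh r\bigr).
\]

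Next I would compute the volume by slicing horizontally, exactly as in the $\H^2$ case. Writing $h=x_3$ for the height, the slice of $B^3((\cosh r)e_3,\sinh r)$ at height $h$ is a Euclidean $2$-disk of radius $\sqrt{\sinh^2 r-(h-\cosh r)^2}$, hence of area $\pi\bigl(\sinh^2 r-(h-\cosh r)^2\bigr)$, and $h$ ranges over $[e^{-r},e^{r}]$. Since the quasihyperbolic density on $\H^3$ is $x_3^{-3}$, Fubini's theorem gives
\[
  \vol(B_k(e_3,r))=\pi\int_{e^{-r}}^{e^{r}}\frac{\sinh^2 r-(h-\cosh r)^2}{h^3}\,dh.
\]

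Finally I would evaluate this elementary integral. Using $\cosh^2 r-\sinh^2 r=1$, the numerator becomes $-1+2(\cosh r)h-h^2$, so the integrand is $-h^{-3}+2(\cosh r)h^{-2}-h^{-1}$, with antiderivative $\tfrac12 h^{-2}-2(\cosh r)h^{-1}-\log h$. Evaluating between $e^{-r}$ and $e^{r}$ and simplifying with $e^{2r}-e^{-2r}=2\sinh(2r)$, $e^{r}-e^{-r}=2\sinh r$, and $2\sinh r\cosh r=\sinh(2r)$ collapses the result to $\sinh(2r)-2r$, which is the claimed value. The only real content is the identification of $B_k(e_3,r)$ with the Euclidean ball above (equivalently, the coincidence of the quasihyperbolic and hyperbolic metrics on the half space); granting that, the computation is routine, the one point to watch being that the $h^{-1}$ term is exactly what produces the $-2r$. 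As a check, the same answer arises from the geodesic-polar formula $4\pi\int_0^r\sinh^2 t\,dt$.
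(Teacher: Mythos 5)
Your proof is correct and follows essentially the same route as the paper: identify $B_k(x,r)$ with the Euclidean ball $B^3((\cosh r)x_3 e_3,\, x_3\sinh r)$ and compute the volume by horizontal slicing against the density $h^{-3}$; you merely normalize $x=e_3$ and write out the elementary integral that the paper leaves implicit. The cross-check via $4\pi\int_0^r\sinh^2 t\,dt$ is a nice confirmation but not needed.
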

\begin{proof}
  Let us choose $x=t e_3=(0,0,t)\in\H^3$ implying that
  \[
    B_k(x,r) = B^3(e_3 t \cosh r,t \sinh r).
  \]
  Now
  \begin{eqnarray*}
    \vol(B_k(x,r)) &=& \int_{t\cosh r-t\sinh r}^{t\cosh r+t\sinh r}\frac{\pi (t^2 \sinh^2 r-(\cosh r-h)^2)}{h^3}dh\\
    &=& \pi (\sinh (2r) -2r).
  \end{eqnarray*}
\end{proof}

\section{Number of Whitney cubes}\label{sec:Whitney}

Recall that $N_i$ denotes the number of the  $i^{th}$ generation Whitney cubes of $\Rn\setminus E$. We also define the number
\begin{equation}\label{eqn:Ntilde}
\widetilde{N}_i=N_{i+k_0}+\ldots+N_{i+k_1},
\end{equation}
where $k_0$ is the smallest integer satisfying $8\sqrt{n}2^{-k_0}\le 1$ and $k_1$ is the largest integer with $80\sqrt{n} 2^{-k_1}\ge 1$.

\begin{lemma}\label{lemma:por}
If $E\subset \Rn$ is compact and uniformly porous, then there are constants $0<c<C<\infty$ and $i_0\in\N$ (depending only on the uniform porosity data) such that for all $i\ge i_0$, then it holds that
\begin{equation*}
c 2^{in}m(E(2^{-i}))\le\widetilde{N}_i\le C 2^{in} m(E(2^{-i}))\,.
\end{equation*}

\end{lemma}

\begin{proof}
Let $0<\lambda<1<\Lambda<\infty$. Choose $0<p<(1+\lambda)/(2\Lambda)$ and $r_0>0$ such that $E$ is $(2p,r_0)$-uniformly porous. Given $r>0$ such that $R=\tfrac{1}{2p}(1+\lambda) r<r_0$, let $B_1,\ldots,B_N$ be a maximal collection of disjoint balls of radius $R$ centered at $E$. Then by elementary geometry
\begin{equation}\label{uniformR}
N \Omega_n R^n\le m(E(R))\le 3^n N \Omega_n R^n\,.
\end{equation}

For each $B_i$, we can find $y_i\in \tfrac12B_i$ with $d(y_i,E)\ge p R=\tfrac12(1+\lambda)r$. Moreover (since we live in the Euclidean space), we can in fact find such $y_i$ with $d(y_i,E)=\tfrac12(1+\lambda)r$. Then $B(y_i,\tfrac12(1-\lambda)r)\subset E(r)\setminus E(\lambda r)$ and thus, combined with \eqref{uniformR},
\begin{align*}
m(E(r)\setminus E(\lambda r))&\ge N \Omega_n \left(\frac{1-\lambda}{2}\right )^{n} r^n\ge\left(\frac{p(1-\lambda)}{3(1+\lambda)}\right)^n 3^n N\Omega_n R^{n}\\
&\ge \left(\frac{p(1-\lambda)}{3(1+\lambda)}\right)^n m(E(\Lambda r))\,,
\end{align*}
recall that $R\ge \Lambda r$.

Applying the above estimate with $r=2^{-i-3}$, $\lambda=\tfrac12$ and $\Lambda=8$ and using \eqref{Ek} implies
\begin{align*}
m(E(2^{-i}))&\le c m(E(2^{-i-3})\setminus E(2^{-i-4}))\\
&\le c\sum_{j=k_0}^{k_1}N_{i+j} 2^{-(i+j)n}\le c m(E(2^{-i}))
\end{align*}
where $c<+\infty$ depends only on $n$ and $p$.
\end{proof}

We also need the following simple fact

\begin{lemma}\label{lemma:reg}
If $E\subset\Rn$ is compact and $Q$-regular for some $0<Q<n$, then there are constants $0<c<C<\infty$ (depending only on the $Q$-regularity data) such that
$c r^{n-Q}\le m(E(r))\le C r^{n-Q}$ for $0<r<\diam(E)$.
\end{lemma}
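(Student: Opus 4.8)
The plan is to exploit the elementary identity $E(r)=\bigcup_{x\in E}\overline{B}(x,r)$, which ties the Lebesgue measure of the $r$-neighborhood to the combinatorics of coverings and packings of $E$ at scale $r$; the $Q$-regularity of the reference measure $\mu$ is precisely what controls the cardinalities of such families. First I would record the normalization $\mu(E)\asymp(\diam E)^Q$, with constants depending only on the $Q$-regularity data. The lower bound $\mu(E)\ge\alpha(\diam E/2)^Q$ is immediate from applying the regularity inequality to a single ball $B(x_0,\diam E/2)$ with $x_0\in E$. For the upper bound I would take a maximal $(\diam E/2)$-separated subset of $E$; its cardinality is bounded by a dimensional constant (the corresponding half-radius balls are disjoint and sit inside a ball of radius $2\diam E$), while the balls themselves cover $E$, so summing the regularity estimate over this finite family gives $\mu(E)\le C(\diam E)^Q$.

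For the upper estimate $m(E(r))\le Cr^{n-Q}$, fix $0<r<\diam E$ and choose a maximal $r$-separated family $x_1,\dots,x_N\in E$. The balls $B(x_i,r/2)$ are pairwise disjoint, so $\sum_i\mu(B(x_i,r/2))\le\mu(E)$; combined with the lower regularity bound $\mu(B(x_i,r/2))\ge\alpha(r/2)^Q$ and the normalization above, this yields $N\le C(\diam E/r)^Q$. On the other hand, maximality forces $E(r)\subset\bigcup_i B(x_i,2r)$, whence $m(E(r))\le N\,\Omega_n(2r)^n\le C'(\diam E)^Q\,r^{n-Q}$, and the factor $(\diam E)^Q$ is absorbed into the constant since $\diam E$ belongs to the data.

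The lower estimate is dual. Assuming first $0<r<\diam E/2$, take a maximal $2r$-separated family $x_1,\dots,x_N\in E$. Now the balls $B(x_i,r)$ are pairwise disjoint and each is contained in $E(r)$, so $m(E(r))\ge N\,\Omega_n r^n$; to bound $N$ from below I use that maximality gives the covering $E\subset\bigcup_i B(x_i,2r)$, so $\mu(E)\le N\beta(2r)^Q$ and hence $N\ge c(\diam E/r)^Q$ after invoking the lower bound for $\mu(E)$. This produces $m(E(r))\ge c'(\diam E)^Q\,r^{n-Q}$. The remaining range $\diam E/2\le r<\diam E$ is handled directly, since there $m(E(r))\ge m(B(x_0,r))=\Omega_n r^n\ge\Omega_n(\diam E/2)^Q\,r^{n-Q}$. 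The only genuine subtlety, and the step I would be most careful about, is the global normalization $\mu(E)\asymp(\diam E)^Q$ together with the fact that the regularity inequality is assumed only for radii strictly below $\diam E$; once that is in place, all of the separated-family arguments are routine packing estimates.
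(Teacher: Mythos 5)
Your proof is correct and follows essentially the same route as the paper's: both arguments take a maximal separated family of balls centered on $E$ at scale $r$, use disjointness together with the lower regularity bound to get $N\lesssim \mu(E)r^{-Q}$ and the covering property with the upper bound to get $N\gtrsim \mu(E)r^{-Q}$, and then compare $m(E(r))$ with $N\Omega_n r^n$. Your treatment is somewhat more careful than the paper's on two minor points it leaves implicit --- the normalization $\mu(E)\asymp(\diam E)^Q$ and the restriction of the regularity inequality to radii below $\diam E$ --- but these are refinements of the same argument, not a different one.
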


\begin{proof}
Let $r\in(0,\diam(E))$. Let $B_1,\ldots, B_{N}$ be a maximal collection of disjoint balls each of radius $r$ centered at points in $E$. Then, as in \eqref{uniformR},
\begin{equation*}
\bigcup_{i=1}^NB_i\subset E(r)\subset\bigcup_{i=1}^N3B_i\,,
\end{equation*}
and in particular,
\begin{equation}\label{eq:3r}
N \Omega_n r^n\le m(E(r))\le N \Omega_n 3^n r^n\,.
\end{equation}
Using the $Q$-regularity, we have
\begin{equation}\label{eq:mu}
N \alpha r^Q\le \sum_i\mu(B_i)\le \mu(E) = \mu(E(r)) \le\sum_i\mu(3 B_i)\le N \beta 3^Q r^Q\,.
\end{equation}
The claim follows by combining \eqref{eq:3r} and \eqref{eq:mu}.
\end{proof}

Putting the two previous Lemmas together, we obtain:

\begin{corollary}\label{cor:reg}
If $E\subset \Rn$ is compact and $Q$-regular for some $0<Q<n$, then there are constants $0<c<C<\infty$  and $i_0\in\N$ (depending only on the $Q$-regularity data) such that for all $i\ge i_0$,
\begin{equation*}
c 2^{iQ}\le\widetilde{N}_i\le C 2^{iQ}\,.
\end{equation*}
\end{corollary}

\begin{remark}
1. The porosity and regularity assumptions in the above results are essential because without them, we cannot guarantee that $m(E(2r)\setminus E(r))$ is comparable to $m(E(r))$ for small $r>0$.

2. In \cite{klv} the authors considered the relations between the number of Whitney balls and Minkowski dimension, in an Ahlfors regular space, which also can be reformulated equivalently in terms of the measures of the layer sets. They also showed a close connection between the 'surface area' of the layer sets and the number of Whitney balls.
\end{remark}




\section{Growth of the quasihyperbolic volume}

Our next goal is to prove growth estimates of the above type
for a larger class of domains, the so-called $\varphi$-uniform
domains. We first observe that the
quasihyperbolic volume of any Whitney cube is essentially a constant.

\begin{lemma}\label{lem:vol4wcube}
  Let $E$ be a closed subset of $\Rn$ and let $G$ be a component of $\Rn\setminus E$. Then for the Whitney cubes of $\Rn \setminus E$ contained in $G$ we have
\[
    2^{-2n}n^{-n/2} \le \vol(Q_j^i) \le n ^{-n/2}.
\]
\end{lemma}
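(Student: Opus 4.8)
The plan is to obtain both inequalities by pointwise control of the weight $d(z)^{-n}$ on the cube, after reducing the boundary distance $d(z)=d(z,\partial G)$ to the distance $d(z,E)$ to which the Whitney estimates apply. Write $a=2^{-k}$ for the common edge length, so that $m(Q_j^k)=a^n$ and $\diam(Q_j^k)=\sqrt n\,a$. Then
\[
\vol(Q_j^k)=\int_{Q_j^k}\frac{dm(z)}{d(z)^n},
\]
and the whole proof consists in sandwiching $d(z)$ between constant multiples of $a$, uniformly over $z\in Q_j^k$, and integrating.

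The one step with actual content is the identity $d(z)=d(z,E)$ for $z$ in a Whitney cube contained in $G$. Since $G$ is a connected component of the open set $\Rn\setminus E$, I first note $\partial G\subseteq\partial(\Rn\setminus E)\subseteq E$, which gives $d(z,\partial G)\ge d(z,E)$. For the opposite inequality I take $p\in E$ with $|z-p|=d(z,E)$ and follow the segment $[z,p]$ out of the open set $G$: it must cross $\partial G$ at some first point $q$, and then $q\in\partial G$ with $|z-q|\le|z-p|$, so $d(z,\partial G)\le d(z,E)$. Hence $d(z)=d(z,E)$ throughout $Q_j^k$, and I may substitute the Whitney bounds (\ref{wcestimate}) and (\ref{Ek}).

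With that reduction the estimates are immediate. The lower Whitney bound $d(z,E)\ge d(Q_j^k,E)\ge\sqrt n\,a$ gives $d(z)^{-n}\le n^{-n/2}a^{-n}$, so $\vol(Q_j^k)\le m(Q_j^k)\,n^{-n/2}a^{-n}=n^{-n/2}$, which is the upper bound. For the lower bound I use the upper Whitney bound $d(Q_j^k,E)\le 2^{2-k}\sqrt n=4\sqrt n\,a$ as a uniform upper estimate for $d(z)$ on the cube; then $d(z)^{-n}\ge (4\sqrt n\,a)^{-n}=2^{-2n}n^{-n/2}a^{-n}$ and integrating yields $\vol(Q_j^k)\ge 2^{-2n}n^{-n/2}$.

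The main obstacle is the reduction in the second paragraph: the quasihyperbolic weight is built from $d(\cdot,\partial G)$, while the combinatorics of the Whitney decomposition only speak about $d(\cdot,E)$, and one has to know these agree on the cubes interior to $G$. The only point demanding a little care in the lower bound is that (\ref{wcestimate}) controls the infimal distance $d(Q_j^k,E)$ rather than the pointwise distance; for a fully pointwise bound one has $d(z,E)\le d(Q_j^k,E)+\diam(Q_j^k)\le 5\sqrt n\,a$, so matching the stated constant $2^{-2n}$ amounts to using the cube-to-$E$ distance $4\sqrt n\,a$ as the uniform upper bound on $d(z)$.
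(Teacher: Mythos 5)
Your proof is correct and follows essentially the same route as the paper: bound $d(z)^{-n}$ pointwise on the cube via the Whitney estimates and multiply by $m(Q_j^k)=2^{-kn}$; the paper simply omits the identification $d(z,\partial G)=d(z,E)$ for cubes contained in $G$, which you supply. The caveat in your last paragraph is real and applies equally to the paper's own proof: (\ref{wcestimate}) controls only the infimal distance, so the honest pointwise bound is $d(z)\le 5\sqrt{n}\,2^{-k}$ as in (\ref{Ek}), yielding the constant $5^{-n}n^{-n/2}$ rather than the stated $2^{-2n}n^{-n/2}$ --- a harmless discrepancy, since all that is used later is that $\vol(Q_j^k)$ is bounded above and below by positive constants depending only on $n$.
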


\begin{proof}
  Since $m(Q_j^i) = 2^{-in}$ we have by (\ref{wcestimate})
\begin{equation*}
    \vol(Q_j^i) \ge \frac{2^{(i-2)n}}{\sqrt{n}^n}2^{-in} = 2^{-2n}n^{-n/2},
\end{equation*}
and
$$
\vol(Q_j^i) \le \frac{2^{in}}{\sqrt{n}^n}2^{-in} = n^{-n/2}.
$$
\end{proof}

\begin{theorem}\label{upperbound}
Let $G$ be a proper subdomain of\, $\Rn$ with compact and $Q$-regular boundary for some $Q\in(0,n)$.
For each $x\in G$, there exists a constant $C<\infty$ which depends only on the $Q$-regularity data and on $d(x)$ such that for all sufficiently large $r$, we have
$$
\vol(B_k(x,r))\leq C e^{Qr}.
$$
\end{theorem}

\begin{proof}
Let $E=\partial G$.
We have
$$B_k(x,r)\subset\{z\in G: e^{-r}d(x)<d(z)<e^{r}d(x)\}=:D$$
and
$$Q_j^i\subset E(2^{-i}\sqrt{n},5\cdot2^{-i}\sqrt{n}).$$
If $5\cdot2^{-i}\sqrt{n}<e^{-r}d(x)$ or $2^{-i}\sqrt{n}>e^{r}d(x)$, i.e.
$$i>\dfrac{r+\log(5\sqrt{n}/d(x))}{\log2}=:K,\quad\mbox{or}\quad i<-\dfrac{r-\log(\sqrt{n}/d(x))}{\log2}=:K'$$
then $Q_j^i\cap D=\emptyset,$ and thus
\begin{equation}\label{eq:ballcontained}
B_k(x,r)\subset\bigcup_{i={[K']}}^{[K]}\bigcup_{j=1}^{N_i}Q_j^i.
\end{equation}
Note that the inequality $N_i\leq M(\diam(E))2^{ni}$ always holds for all $i\in \N$ \cite{mv} and therefore $N_i\leq C(\diam(E),i_0)$ for $i\leq i_0$, where $i_0$ is as in Lemma \ref{lemma:por}.
Hence for all sufficiently large $r$, by \eqref{eq:ballcontained} and Lemma \ref{lem:vol4wcube}, we have
$$\vol(B_k(x,r))\leq\sum_{i={[K']}}^{[K]}\sum_{j=1}^{N_i}\vol(Q_j^i)\leq n^{-n/2}\sum_{i={[K']}}^{[K]}N_i.$$
Using the estimates for the number of Whitney cubes from Lemma \ref{lemma:por} and for the measure of level sets from Lemma \ref{lemma:reg}, we get
\begin{eqnarray*}
\vol(B_k(x,r))&\leq&n^{-n/2}(i_0-[K'])C(\diam(E),i_0)\\
              & &+n^{-n/2}\sum_{i=i_0}^{[K]}c2^{in}m(E(2^{-i}))\\
              &\leq&n^{-n/2}(i_0-[K'])C(\diam(E),i_0)\\
              & &+n^{-n/2}\sum_{i=i_0}^{[K]}c2^{in}c'(2^{-i})^{n-Q}\\
              &\leq&n^{-n/2}(i_0-[K'])C(\diam(E),i_0)+\dfrac{cc'n^{-n/2}}{2^{Q}-1}2^{KQ}\\
              &\leq&C e^{Qr},
\end{eqnarray*}
where the last inequality holds for sufficiently large $r$ by the definition of $K$ and $K'$.
\end{proof}

\comment{

\begin{lemma}\label{thm:hausdorff}
Let $\emptyset\neq E\subset\Rn$ be an $(\alpha,r_0)$-uniformly porous compact set such that $G=\Rn\setminus E$ is a  $\psi$-uniform domain and let $x\in G$. Then there exists $r_1=r_1(x)>0$ such that
  $$
    d(w,B_k(x,r))\leq\dfrac{{\rm diam}(\partial G)+d(x)+r_0}{\alpha\psi^{-1}(r)}
  $$
  for all $w\in \partial G$ and $r\geq r_1$.
\end{lemma}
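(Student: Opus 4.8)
The plan is to fix a boundary point $w\in\partial G$ and, for each sufficiently large $r$, to exhibit a single point $y$ that lies in $B_k(x,r)$ yet is within distance roughly $\rho$ of $w$; then $d(w,B_k(x,r))\le|w-y|$ will give the bound. The scale at which I would search for $y$ is
\[
  \rho=\frac{\diam(\partial G)+d(x)+r_0}{\alpha\,\psi^{-1}(r)}.
\]
First I would note that $\psi$ is necessarily unbounded: since $G=\Rn\setminus E$ contains points at arbitrarily large quasihyperbolic distance from $x$ (quasihyperbolic distances blow up as one approaches $\partial G$), the uniformity inequality $k_G(x,\cdot)\le\psi(\cdot)$ would fail if $\psi$ were bounded. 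Hence $\psi^{-1}(r)$ is defined for large $r$ and tends to $\infty$, so there is $r_1=r_1(x)$ such that both threshold conditions $\rho<r_0$ and $\alpha\rho\le d(x)$ hold for all $r\ge r_1$.

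Next, because $\partial G\subset E$ and $E$ is $(\alpha,r_0)$-uniformly porous, applying porosity at the point $w\in E$ at scale $\rho<r_0$ produces a point $y\in B(w,\rho)$ with $d(y,E)\ge\alpha\rho$. Since $\partial G\subset E$ this yields $d(y)=d(y,\partial G)\ge d(y,E)\ge\alpha\rho>0$, so in particular $y\in G$ and $y$ lies in the same (connected) domain $G$ as $x$.

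I would then estimate the two ingredients of the uniformity inequality. For the numerator, choosing a nearest boundary point $w_0$ of $x$ gives $|x-w|\le|x-w_0|+|w_0-w|\le d(x)+\diam(\partial G)$, whence $|x-y|\le|x-w|+|w-y|<d(x)+\diam(\partial G)+r_0$. For the denominator, $d(y)\ge\alpha\rho$ together with $d(x)\ge\alpha\rho$ (valid for $r\ge r_1$) gives $\min\{d(x),d(y)\}\ge\alpha\rho$. Combining these,
\[
  \frac{|x-y|}{\min\{d(x),d(y)\}}\le\frac{\diam(\partial G)+d(x)+r_0}{\alpha\rho}=\psi^{-1}(r),
\]
so $\psi$-uniformity yields $k_G(x,y)\le\psi(\psi^{-1}(r))=r$, i.e. $y\in B_k(x,r)$. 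Finally, $d(w,B_k(x,r))\le|w-y|<\rho$, which is precisely the asserted estimate.

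The genuinely delicate points are the two threshold conditions $\rho<r_0$ and $\alpha\rho\le d(x)$: they are exactly where the hypothesis ``$r$ sufficiently large'' and the dependence $r_1=r_1(x)$ enter, and they rely on knowing $\psi^{-1}(r)\to\infty$. Keeping the term $\min\{d(x),d(y)\}$ under control (so that it is bounded below by $\alpha\rho$ rather than by the possibly smaller $d(x)$) is the other point requiring care; everything else is the triangle inequality and a direct substitution into the definition of $\psi$-uniformity.
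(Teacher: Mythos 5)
Your proof is correct and takes essentially the same route as the paper's: apply porosity at $w$ at the scale $\rho=(\diam(\partial G)+d(x)+r_0)/(\alpha\psi^{-1}(r))$ to produce $y\in B(w,\rho)\cap G$ with $d(y)\ge\alpha\rho$, then feed the bounds $|x-y|\le\diam(\partial G)+d(x)+r_0$ and $\min\{d(x),d(y)\}\ge\alpha\rho$ into the $\psi$-uniformity inequality to get $k_G(x,y)\le r$. Your explicit isolation of the two threshold conditions ($\rho<r_0$ and $\alpha\rho\le d(x)$) and the remark that $\psi$ must be unbounded are exactly what is packed into the paper's choice $r_1=\psi\bigl((\diam(\partial G)+d(x)+r_0)/\min\{d(x),\alpha r_0\}\bigr)$, just made more explicit.
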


\begin{proof}
  Let $w\in\partial G$. Since $\partial G$ is uniformly $(\alpha,r_0)$-porous, for each $s\in(0,r_0]$ there exists $y\in G\cap B^n(w,s)$ with $d(y)>\alpha s$. By $\psi$-uniformity of $G$,
  $$    k_G(x,y)\leq\psi\left(\dfrac{|x-y|}{\min\{d(x),d(y)\}}\right)\leq\psi\left(\dfrac{{\rm diam}(\partial G)+d(x)+r_0}{\min\{d(x),\alpha s\}}\right),
  $$
  which implies
\[d\left(w,B_k\left(x,\psi\left(\dfrac{{\rm diam}(\partial G)+d(x)+r_0}{\min\{d(x),\alpha s\}}\right)\right)\right)\le s\,.\]
Substituting $s=(\diam(\partial G)+d(x)+r_0)/(\alpha\psi^{-1}(r))$ this reads
\[d(w,B_k(x,r))\le\frac{\diam(\partial G)+d(x)+r_0}{\alpha\psi^{-1}(r)}\]
for $r\ge\psi\left(\left(\diam(\partial G)+d(x)+r_0\right)/\min\{d(x),\alpha r_0\}\right)=:r_1.$
\end{proof}

}

\begin{lemma}\label{thm:hausdorff}
Let $\emptyset\neq E\subset\Rn$ be a  compact set such that $G=\Rn\setminus E$ is a  $\psi$-uniform domain and let $x\in G$. Then for all $r>0$,
$$E\left(\frac{{\rm diam}(E)+2d(x)}{\psi^{-1}(r)},d(x)\right)\subset B_k(x,r).$$
\end{lemma}

\begin{proof}
We only need to consider for $r\geq\psi(({\rm diam}(E)+2d(x))/d(x))$. Otherwise, it is trivial.
For arbitrary
$$y\in E\left(\frac{{\rm diam}(E)+2d(x)}{\psi^{-1}(r)},d(x)\right),$$
we have that
\begin{equation}\label{eqn:dy}
d(y)<d(x)\qquad\mbox{and}\qquad \psi\left(\dfrac{{\rm diam}(E)+2d(x)}{d(y)}\right)<r.
\end{equation}
By \eqref{eqn:dy} and the $\psi$-uniformity of $G$ we get
\begin{align*}
k_G(x,y)&\leq\psi\left(\dfrac{|x-y|}{\min\{d(x),d(y)\}}\right)\\
        &\leq\psi\left(\dfrac{{\rm diam}(E)+d(x)+d(y)}{\min\{d(x),d(y)\}}\right)\\
        &\leq\psi\left(\dfrac{{\rm diam}(E)+2d(x)}{d(y)}\right)\\
        &< r,
\end{align*}
which implies the desired inclusion.
\end{proof}

\begin{theorem}\label{lowerbound}
Let $E\subset\Rn$ be a compact $Q-$regular set with $0<Q<n$ such that $G=\Rn\setminus E$ is a $\psi-$uniform domain. Then for each $x\in G$ there exist $c>0$ and $r_1>0$ such that
$$
  \vol(B_k(x,r))\geq c (\psi^{-1}(r))^Q\,,
$$
for all $r>r_1$, where $c$ and $r_1$ depend only on the $Q$-regularity data and $d(x)$.
\end{theorem}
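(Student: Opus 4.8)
The plan is to bound $\vol(B_k(x,r))$ from below by the total quasihyperbolic volume of those Whitney cubes of $G=\Rn\setminus E$ that are contained in $B_k(x,r)$. By Lemma \ref{lem:vol4wcube} each such cube contributes at least $2^{-2n}n^{-n/2}$ to the volume, so it suffices to exhibit of order $(\psi^{-1}(r))^Q$ cubes lying inside the ball; the count will come from Corollary \ref{cor:reg}, which gives $\widetilde{N}_{k}\ge c\,2^{kQ}$ for every $k\ge k_0$. (Here I use that $E$ is bounded, consistent with $\diam(\partial G)$ appearing in Lemma \ref{thm:hausdorff}.)

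The heart of the matter is choosing the correct scale and proving containment. I would set $M=M(x):=d(x)+\diam(\partial G)+1$ and $\delta=\delta(r):=2M/\psi^{-1}(r)$; note $\delta\to0$ as $r\to\infty$ because $\psi^{-1}$ is increasing and unbounded. Let $k_*$ be the largest integer with $\sqrt{n}\,2^{-(k_*+n_1)}\ge\delta$, so that $2^{k_*}$ is comparable to $\psi^{-1}(r)$ up to a constant depending only on $x$ and the data, and $k_*\ge k_0$ once $r$ is large. I claim that every Whitney cube $Q$ of the generations $k_*+n_0,\dots,k_*+n_1$ lies in $B_k(x,r)$. Indeed each such $Q$ satisfies $d(Q,E)\ge\sqrt{n}\,2^{-(k_*+n_1)}\ge\delta$, and since $E$ is compact and $Q$ lies within $5\sqrt{n}\,2^{-(k_*+n_0)}\le1$ of $E$ for large $r$, every $q\in Q$ obeys $|x-q|\le M$ and $d(q)\ge\delta$. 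The $\psi$-uniformity of $G$ then gives
$$
k_G(x,q)\le\psi\!\left(\frac{|x-q|}{\min\{d(x),d(q)\}}\right)\le\psi\!\left(\frac{M}{\delta}\right)=\psi\!\left(\tfrac12\psi^{-1}(r)\right)<r,
$$
where in the remaining case $d(q)>d(x)$ one instead bounds the argument by $|x-q|/d(x)\le M/d(x)$, which is below $\psi^{-1}(r)$ for $r$ large. This containment is precisely the quantitative reach of $B_k(x,r)$ towards $\partial G$ recorded in Lemma \ref{thm:hausdorff}, here read off at the scale $\delta$, which is comparable to $1/\psi^{-1}(r)$.

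With containment established the proof concludes at once: the cubes counted by $\widetilde{N}_{k_*}$ have pairwise disjoint interiors and all lie in $B_k(x,r)$, so by Lemma \ref{lem:vol4wcube} and Corollary \ref{cor:reg},
$$
\vol(B_k(x,r))\ge\widetilde{N}_{k_*}\,2^{-2n}n^{-n/2}\ge c\,2^{-2n}n^{-n/2}\,2^{k_*Q}\ge c'\,(\psi^{-1}(r))^Q
$$
for all $r>r_1$, which is the assertion. The one delicate point, and the step I expect to require care, is the constant chase in the containment claim: the scale $\delta$ must be taken just large enough that $\psi$-uniformity forces $k_G(x,q)<r$ for every point of every cube of the selected generations, yet small enough (equivalently $2^{k_*}$ comparable to $\psi^{-1}(r)$) that Corollary \ref{cor:reg} still delivers the full power $(\psi^{-1}(r))^Q$. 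The compactness of $E$ is used exactly to bound $|x-q|$ uniformly over all these cubes, which is what makes the single choice of $\delta$ work simultaneously for all of them.
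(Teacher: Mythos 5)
Your proof is correct and follows essentially the same route as the paper's: pick the Whitney generations at scale comparable to $1/\psi^{-1}(r)$, show they lie in $B_k(x,r)$ via $\psi$-uniformity, and combine the cube count from Corollary \ref{cor:reg} with the per-cube volume bound of Lemma \ref{lem:vol4wcube}. The only cosmetic difference is that you apply $\psi$-uniformity directly to the points of the selected cubes rather than routing the containment through Lemma \ref{thm:hausdorff}, which if anything makes that step more explicit than in the paper.
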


\begin{proof}
Let $K$ be the largest integer such that
\begin{equation}\label{eqn:lower}
2^{-(K+k_1)}\sqrt{n}\ge\frac{{\rm diam}(E)+2d(x)}{\psi^{-1}(r)},
\end{equation}
and let
$$r_1=\psi\left(({\rm diam}(E)+2d(x))\max\left\{\frac{10\cdot 2^{k_1-k_0}}{d(x)},\frac{2^{i_0+k_1}}{\sqrt{n}}\right\}\right),$$
where $i_0$ is as in Lemma \ref{lemma:por}, and $k_1$ and $k_0$ are as in \eqref{eqn:Ntilde}.
Then for all $r>r_1$, we have that
\begin{equation}\label{eqn:C}
2^{-K}<\frac{C}{\psi^{-1}(r)},\quad C:=\frac{2^{k_1+1}({\rm diam}(E)+2d(x))}{\sqrt{n}},
\end{equation}
and
\begin{equation}\label{eqn:upper}
5\sqrt{n}2^{-(K+k_0)}\leq d(x).
\end{equation}
For $r>r_1$, it follows from \eqref{Ek}, \eqref{eqn:lower}, \eqref{eqn:upper}, and Lemma \ref{thm:hausdorff}
that  for $K+k_0\leq m\leq K+k_1$, all the $m^{th}$-generation Whitney cubes belong to $B_k(x,r)$. Together with Corollary \ref{cor:reg} and Lemma \ref{lem:vol4wcube} this yields for large values of $r$ the required estimate
\[\vol(B_k(x,r))\ge c_0 \widetilde{N}_K\ge c_1 2^{KQ}\ge c(\psi^{-1}(r))^Q\,,\]
where the last inequality follows from \eqref{eqn:C}, and  the constant $c>0$ only depends on the $Q$-regularity data and $d(x)$.
\end{proof}

{\rm
By combining Theorem \ref{upperbound}, Theorem \ref{lowerbound} and the definition of uniform domain we get the following corollary.}

\begin{corollary}\label{cor:cor}
Let $E\subset \Rn$ be a compact $Q-$regular set with $0<Q<n$ such that $G=\Rn\setminus E$ is a uniform domain with the uniformity constant $L>1$. Then for each $x\in G$ and sufficiently large $r>0$,
$$
  c e^{Qr/L}\leq\vol(B_k(x,r))\leq C e^{Qr},
$$
where $C<\infty$  depends only on the $Q$-regularity data and $L$ and $c>0$ depends only on the $Q$-regularity data, $L$, and $d(x)$.
\end{corollary}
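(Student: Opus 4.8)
The plan is to obtain both bounds by specializing the two main theorems to the case of a uniform domain, where $\psi(t) = L\log(1+t)$, and then translating the resulting estimates in terms of $\psi^{-1}$ into clean exponential bounds in $r$.

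For the upper bound, the situation is immediate. Since $G = \Rn \setminus E$ with $E$ compact and $Q$-regular, Theorem \ref{upperbound} applies verbatim: its hypotheses are exactly that $G$ has compact, $Q$-regular boundary (with $0<Q<n$), so it yields a constant $C<\infty$, depending only on the $Q$-regularity data, with $\vol(B_k(x,r)) \le C e^{Qr}$ for all sufficiently large $r$. Uniformity of the domain is not even needed here; I would simply cite Theorem \ref{upperbound} and note that the dependence of $C$ matches the claim. The only care required is to observe that the constant $C$ from Theorem \ref{upperbound}'s proof is controlled by the $Q$-regularity data (and through $\diam(E)$, which is part of that data), so no dependence on $x$ or $L$ enters the upper constant beyond what is stated.

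For the lower bound, I would invoke Theorem \ref{lowerbound}. First I must check its hypotheses hold: $E$ is closed (indeed compact) and $Q$-regular with $0<Q<n$, and $G=\Rn\setminus E$ is $\psi$-uniform. Since $G$ is $L$-uniform, it is $\psi$-uniform with $\psi(t)=L\log(1+t)$, which is continuous, strictly increasing, and vanishes at $0$ as required. (I also note that $Q$-regularity of $E$ implies $E$ is uniformly porous, by the remark in Section \ref{sec:Whitney} citing \cite[Lemma 3.12]{bhr}, so the porosity data invoked inside the proof of Theorem \ref{lowerbound} is available.) Theorem \ref{lowerbound} then gives $\vol(B_k(x,r)) \ge c(\psi^{-1}(r))^Q$ for $r>r_1$, with $c$ depending on the $Q$-regularity data and $d(x)$. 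The remaining task is purely computational: inverting $\psi(t)=L\log(1+t)$ gives $\psi^{-1}(r) = e^{r/L}-1$, so $(\psi^{-1}(r))^Q = (e^{r/L}-1)^Q$, and for large $r$ this is bounded below by a constant multiple of $e^{Qr/L}$. Folding that constant into $c$ yields $\vol(B_k(x,r)) \ge c\, e^{Qr/L}$, completing the lower bound.

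The step requiring the most attention is the bookkeeping of constant dependencies, which is where the corollary's precise claim lives rather than in any deep new estimate. I would verify that the lower constant $c$ genuinely depends only on the $Q$-regularity data, $L$, and $d(x)$: the factor from $\psi^{-1}$ brings in $L$, the factor from Theorem \ref{lowerbound} brings in the $Q$-regularity data and $d(x)$, and the threshold $r_1$ (coming from Lemma \ref{thm:hausdorff} via the quantity $\diam(\partial G)+d(x)+r_0$) depends on these same quantities. The main subtlety is thus confirming that the $x$-dependence on the lower side is confined to $d(x)$ alone and does not leak into the exponential rate $Q/L$ itself; this follows because $x$ enters the estimate of Theorem \ref{lowerbound} only through the additive constant $C$ in the definition of $K$, which shifts $2^{KQ}$ by a multiplicative factor absorbed into $c$, leaving the exponent untouched. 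No genuine obstacle arises beyond this careful accounting, since both inequalities are direct consequences of the established theorems.
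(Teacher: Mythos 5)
Your proposal is correct and follows exactly the route the paper intends: the upper bound is Theorem \ref{upperbound} verbatim, and the lower bound is Theorem \ref{lowerbound} with $\psi(t)=L\log(1+t)$, so that $\psi^{-1}(r)=e^{r/L}-1\ge c\,e^{r/L}$ for large $r$. The paper itself gives no more detail than ``combine the two theorems and the definition of uniform domain,'' so your careful tracking of the constant dependencies is, if anything, more thorough than the original.
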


\begin{example}
Let $E\subset \Rn$ be a self-similar set whose complement is a uniform domain. For instance, $E$ can be a Cantor set on a hyperplane, the $\tfrac14$-Cantor set in the plane, or more generally any self-similar set satisfying the strong separation condition (See e.g. \cite{f} for the definitions). Then $E$ is $Q$-regular in its dimension and Corollary \ref{cor:cor} can be applied.
\end{example}

\begin{remark}
Although we stated the Theorem \ref{lowerbound} for unbounded domains, essentially the same proof works for all $\psi$-uniform domains $G\subset\Rn$ whose boundary is $Q$-regular and uniformly porous in $G$. The last assumption means that that the porosity holes in the definition of the uniform porosity lie completely inside $G$. For instance, in $\R^2$, $\partial G$ can be the Von-Koch snowflake curve. More generally, the boundary could be the union of a finite number of such $Q_i$-regular pieces, and in this case we could apply the lower bound with the exponent $r\min\{Q_i\}/L$ and the upper bound with the exponent $r\max\{Q_i\}$.
\end{remark}

{\rm It is not known what is the best possible lower bound in Corollary \ref{cor:cor}. It is an interesting open problem if the exponent $Qr/L$ could be replaced by $cr$ for some uniform constant $c>0$ independent of $L$.}

\subsection*{Acknowledgments}
{\rm  The research of Matti Vuorinen was supported by the Academy of Finland, Project 2600066611. Xiaohui Zhang is indebted to the Finnish National Graduate School of Mathematics and its Applications for financial support. The research of Ville Suomala was partially supported by the Academy of Finland.}

\medskip



\bigskip

{\rm\small

({\it Xiaohui Zhang}) {\sc Department of Mathematical Sciences, Zhejiang Sci-Tech University, 310018 Hangzhou, China}

{\it E-mail address}: \verb"xiaohui.zhang@zstu.edu.cn"

\medskip

({\it Riku Kl\'en, Matti Vuorinen}) {\sc Department of Mathematics and Statistics, University of Turku, 20014 Turku, Finland}

{\it E-mail address}: \verb"riku.klen@utu.fi, vuorinen@utu.fi"

\medskip

({\it Ville Suomala}) {\sc Department of Mathematical sciences, P.O Box 3000, FI-90014 University of Oulu, Finland}

{\it E-mail address}: \verb"ville.suomala@oulu.fi"

}

\end{document}